\newtheorem{theorem}{Theorem}[section]
\newtheorem{lemma}[theorem]{Lemma}
\newtheorem{corollary}[theorem]{Corollary}
\theoremstyle{definition}
\newtheorem{definition}{Definition}
\newtheorem{remark}{Remark}
\newtheorem{gltax}{GLT}
\newcommand{\N}{\mathbb{N}}
\newcommand{\Z}{\mathbb{Z}}
\newcommand{\R}{\mathbb{R}}
\newcommand{\C}{\mathbb{C}}
\newcommand{\iu}{\mathrm{i}}
\newcommand{\glt}{\mathrm{GLT}}
\DeclareMathOperator*{\rk}{rank}
\DeclareMathOperator*{\diag}{diag}
\newcommand\norm[1]{\| #1 \|}
\newcommand{\abs}[1]{\left| #1 \right|}
\title{Determining the space dependent coefficients in space-time fractional diffusion equations via Krylov preconditioning}
\author[1]{Asim Ilyas\thanks{ailyas1@uninsubria.it}}
\author[1]{Muhammad Faisal Khan\thanks{mfkhan@uninsubria.it}}
\author[2,3]{Rosita L. Sormani\thanks{rositaluisa.sormani@uninsubria.it}}
\author[1]{Giacomo Tento\thanks{gtento@uninsubria.it}}
\author[2,4]{Stefano Serra-Capizzano\thanks{s.serracapizzano@uninsubria.it}}
\affil[1]{Department of Theoretical and Applied Sciences, University of Insubria, Varese, Italy}
\affil[2]{Department of Science and High Technology, University of Insubria, Como, Italy}
\affil[3]{Department of Mathematics and Computer Science, University of Cagliari, Cagliari, Italy}
\affil[4]{Division of Scientific Computing, Department of Information Technology, Uppsala University, Uppsala, Sweden}
\date{}
\begin{document}
\maketitle

\begin{abstract}
    We consider a time-space fractional diffusion equation with a variable coefficient and investigate the inverse problem of reconstructing the source term, after regularizing the problem with the quasi-boundary value method to mitigate the ill-posedness. The equation involves a Caputo fractional derivative in the space variable and a tempered fractional derivative in the time variable, both of order in $(0,1)$. A finite difference approximation leads to a two-by-two block linear system of large dimensions. We conduct a spectral analysis of the associated matrix sequences, employing tools from Generalized Locally Toeplitz (GLT) theory, and construct the preconditioner guided by the GLT analysis. Numerical experiments are reported and commented, followed by concluding remarks.
\end{abstract}

\section{Introduction}

The aim of this work is to numerically solve the inverse problem of identifying the source term $f(x)$ in the space-time fractional diffusion equation
\begin{equation} \label{eq:problem}
    \begin{dcases}
        {}^T\!D_t^{\xi,\rho} u(x,t) + a(x) {}^C\!D_x^{\eta} u(x,t) = q(t)f(x),
        & x\in (0,1), \; t\in (0,T),
        \\
        u(0,t) = u(1,t) = 0,
        & t\in (0,T),
        \\
        u(x,0) = \varphi(x),
        & x\in (0,1),
        \\
        u(x,T) = \phi(x),
        & x\in (0,1),
    \end{dcases}
\end{equation}
where $T>0$ is the final time, $a(x)$ is a continuous a.e.\! variable coefficient such that $a(x)\neq 0$, $q(t)$ is a given function, $\varphi(x)$ is the initial condition, and $\phi(x)$ is the final time data. Moreover, ${}^{T}\!D_{t}^{\xi,\rho}$ represents the tempered fractional derivative in the time variable of order $\xi\in(0,1)$, defined for $\rho>0$ as follows
\begin{equation*}
    {}^T\!D_t^{\xi,\rho} u(x,t) := \frac{1}{\Gamma(1-\xi)} \int_0^t e^{-\rho(t-\tau)} (t-\tau)^{-\xi} \frac{\partial}{\partial\tau} u(x,\tau) \,\mathrm{d} \tau,
\end{equation*}
while ${}^C\!D_x^{\eta}$ denotes the Caputo fractional derivative in the space variable of order $\eta\in(0,1)$, defined as
\begin{equation*}
    {}^C\!D^\eta_x u(x,t) := \frac{1}{\Gamma(1-\eta)} \int_0^x {(x-\zeta)^{-\eta}} \frac{\partial}{\partial\zeta} u(\zeta,t) \,\mathrm{d}\zeta.
\end{equation*}

The motivation for this work relies on the wide popularity gained by Fractional Diffusion Equations (FDEs) in recent years, since they constitute powerful tools for modeling complex phenomena across various scientific disciplines. Thanks to the inclusion of fractional derivatives, FDEs can effectively capture memory effects, non-local interactions, and anomalous behaviors, making them extremely valuable in many applicative fields, such as viscoelasticity \cite{Zhou2022}, anomalous diffusion \cite{Kilbas2006}, biology \cite{Magin2010,Owolabi2024}, chemistry \cite{Scher2002}, finance \cite{Scalas2000}, and chaotic systems \cite{Wang2011}. In particular, the Caputo fractional derivative is widely used in mathematical modeling due to its ability to capture the impact of nonlocal spatial heterogeneities and incorporate physically meaningful initial conditions while avoiding singularities at zero. Moreover, tempered fractional derivatives are particularly useful for modeling transient anomalous diffusion, where diffusion initially follows a fractional power law but eventually transitions into a normal diffusion profile at long times \cite{Sandev2017}. Equation \eqref{eq:problem} incorporates tempered memory effects in time and fractional spatial diffusion, making it highly suitable for modeling complex transport phenomena, including subdiffusion, superdiffusion, and anomalous dispersion in heterogeneous media.

Furthermore, inverse problems arise in many practical applications where one seeks to determine unknown causes from observed effects. However, these problems are often ill-posed and hence, in particular, small perturbations in the data can lead to significant errors in the solution. As a consequence, various regularization techniques have been developed to ensure well-posedness, and in particular inverse problems for FDE have been studied through diverse analytical and computational approaches \cite{Rooh2024,Malik2024,Ilyas2024,Jin2015,Suhaib2023, Tuan2017}. 

To address the ill-posedness in the sense of Hadamard of the inverse problem \eqref{eq:problem}, we apply the so-called nonlocal or quasi-boundary value method, transforming it into a well-posed formulation. This technique has been successfully applied to handle parabolic and fractional diffusion equations \cite{Denche2005,Hao2008}. First, we observe that the measurement of the final time data is inevitably contaminated by noise, therefore we denote the noisy data as $\phi_\varepsilon(x)$, where $\varepsilon>0$ is the noise level, and assume that
\begin{equation*}
    \norm{\phi - \phi_\varepsilon}_{L^2} \leq \varepsilon,
\end{equation*}
where $\norm{\cdot}_{L^2}$ is the $L^2$ norm on $(0,1)$. Applying the quasi-boundary value method leads to the following well-posed regularized problem
\begin{equation} \label{eq:quasi-problem}
    \begin{dcases}
        {}^T\!D_t^{\xi,\rho} u(x,t) + a(x) {}^C\!D_x^{\eta} u(x,t) = q(t)f_{\varepsilon,\lambda}(x),
        & x\in (0,1), \; t\in (0,T),
        \\
        u(0,t) = u(1,t) = 0,
        & t\in (0,T),
        \\
        u(x,0) = \varphi(x),
        & x\in (0,1),
        \\
        u(x,T) = \phi_\varepsilon(x) - \lambda\, a(x) {}^C\!D_x^{\eta} f_{\varepsilon,\lambda}(x),
        & x\in (0,1),
    \end{dcases}
\end{equation}
where $\lambda>0$ is a regularization parameter, $f_{\varepsilon,\lambda}$ approximates $f(x)$ and the solution $u(x,t)$ represents an approximation of the solution of the original problem \eqref{eq:problem}.

We aim to develop an efficient algorithm for reconstructing the space-dependent source term $f(x)$, starting from the regularized equation \eqref{eq:quasi-problem}. Many efficient numerical methods have been developed to solve FDEs, including finite differences \cite{Chen2014,Hao2021,Thomas2013}, finite elements \cite{Bu2014,Dhatt2012,Li2018}, finite volumes \cite{Eymard2000,Simmons2017}, and spectral methods \cite{Gottlieb1977}. However, high precision often requires very fine spatial and time discretizations, leading to large and complex systems of equations and expensive computations, especially considering that the intrinsic nonlocal character of fractional differential operators leads to dense matrices. Nevertheless, when FDEs are discretized using a uniform grid, the resulting coefficient matrices usually have a Toeplitz-like structure, which allows for faster and more efficient solvers, such as preconditioned iterative solvers \cite{Donatelli2016,Lei2013,Pang2021}.

Here, we discretize the continuous problem through a finite difference scheme, which leads to a two-by-two block linear system, and focus our attention on the design of an effective preconditioner for the Generalized Minimal RESidual (GMRES) method \cite{Saad1986GMRES}. Since the preconditioner needs to approximate the original matrix in a spectral sense to be efficient, we first perform an asymptotical spectral analysis of the coefficient matrix sequences. We use the theory of Generalized Locally Toeplitz (GLT) matrix sequences, which is needed due to presence of the variable coefficient $a(x)$, which spoils the Toeplitzness of the involved structures; see \cite{garoni2017,garoni2018,barbarino2020uni,barbarino2020multi} for a complete treatment of the theory and the tutorials \cite{tom,glt-tutorial} for a gentle guide on the use of the theory in practical applications. The spectral study is then employed for developing an appropriate preconditioning strategy and for evaluating the approximation quality of the preconditioning matrix sequence with respect to the coefficient matrix sequence.

The paper is organized as follows. Section \ref{sec:prelim} introduces the theoretical framework and provides a summary of the essential tools. Section \ref{sec:discre} provides the discretization of the regularized problem by using a finite difference scheme in space and time and presents the resulting linear system, emphasizing the structural features of the resulting matrices. In Section \ref{sec:glt-analysis} the coefficient matrix sequence is studied via GLT tools, while in Section \ref{sec:precond} the same type of analysis is performed for the preconditioned matrix sequences. In Section \ref{sec:num}, numerical experiments are conducted to assess the effectiveness of the proposed preconditioner. Concluding remarks are presented in Section \ref{sec:end}.

\section{Preliminaries} \label{sec:prelim}

We briefly introduce the fundamental tools employed in this work, which are discussed in detail in \cite{garoni2017}. For simplicity, we restrict the definitions and theorems to the unilevel scalar case, which is sufficient for the purposes of the current paper. However, all the concepts can be extended the multilevel \cite{garoni2018} and block \cite{barbarino2020uni,barbarino2020multi} settings, that arise from more complex differential problems such as the multi-dimensional extension of \eqref{eq:problem}.


Let us first establish some useful notation.
\begin{itemize}
    \item Given a square matrix $A_n$, we denote with $\lambda_j(A_n)$ and $\sigma_j(A_n)$ the $j$-th eigenvalue and singular value of $A_n$, respectively, as $j=1,\ldots,n$. 
    \item The spectral norm for a square matrix $A_n$ is denoted as $\norm{A_n}$.
    \item Whenever we use terminology from measure theory, for instance “measurable set”, “measurable function”, “a.e.”, we always refer to the Lebesgue measure in $\R^t$, denoted with $\mu_t$.
\end{itemize}

\subsection{Spectral tools}

Throughout the paper, we deal with matrix sequences, referring to any sequence of the form $\{A_n\}_n$, where $A_n$ is a square matrix of size $d_n$ and $d_n \to \infty$ as $n\to\infty$. To a given matrix sequence, it is often possible to associate a spectral or singular value distribution, according to the following definition.

\begin{definition} \label{def:distrib}
    Let $\{A_n\}_n$ be a matrix sequence, where $A_n$ has size $d_n\times d_n$, and let $\psi:D\subset\R^t \to\C$ be a measurable function defined on a set $D$ with $0 < \mu_t(D) < \infty$. Denote with $C_c(\mathbb{K})$ the set of continuous complex-valued functions with bounded support on $\mathbb{K}\in\{\R,\C\}$.
    \begin{itemize}
        \item $\{A_n\}_n$ has an \emph{(asymptotic) singular value distribution} described by $\psi$ if for any $F\in C_c(\R)$
        \begin{equation*}
            \lim_{n\to\infty} \frac{1}{d_n} \sum_{i=1}^{d_n} F  \big(\sigma_i(A_n)\big) = \frac{1}{\mu_t(D)} \int_D F\big(\abs{\psi(\mathbf{x})}\big) \,\mathrm{d}\mathbf{x},
        \end{equation*}
        where $\sigma_i(A_n)$, $i=1,\ldots,d_n$, are the singular values of $A_n$. We use the notation $\{A_n\}_n \sim_\sigma \psi$.
        \item $\{A_n\}_n$ has an \emph{(asymptotic) spectral} or \emph{eigenvalue distribution} described by $\psi$ if for any $F\in C_c(\C)$
        \begin{equation*}
            \lim_{n\to\infty} \frac{1}{d_n} \sum_{i=1}^{d_n} F  \big(\lambda_i(A_n)\big) = \frac{1}{\mu_t(D)} \int_D F\big(\psi(\mathbf{x})\big) \,\mathrm{d}\mathbf{x},
        \end{equation*}
        where $\lambda_i(A_n)$, $i=1,\ldots,d_n$, are the singular values of $A_n$. We use the notation $\{A_n\}_n \sim_\lambda \psi$.
    \end{itemize}
\end{definition}

The informal meaning behind Definition \ref{def:distrib} is the following: if $\psi$ is continuous a.e.\! and $n$ is large, then the eigenvalues or singular values of $A_n$ (suitably ordered and possibly except for $o(d_n)$ outliers) are approximated by a uniform sampling of $\psi$ over its domain.

The notion of clustering can be seen a special case of distribution. In what follows, we denote the $\varepsilon$-expansion of a set $S\subseteq\C$ as
\begin{equation*}
    B(S,\varepsilon) := \bigcup_{z\in S} B(z,\varepsilon),
\end{equation*}
where $B(z,\varepsilon) := \{w\in\C : \abs{w-z} < \varepsilon\}$ is the complex disk with center $z$ and radius $\varepsilon > 0$. 

\begin{definition} \label{clustering}
    Let $\{A_n\}_n$ be a matrix sequence, with $A_n$ of size $d_n\times d_n$, and let $S\subseteq\C$ be nonempty and closed. The sequence $\{A_n\}_n$ is \emph{strongly clustered} at $S$ in the sense of the eigenvalues if $\forall\varepsilon >0$
    \begin{equation*}
        \# \{ j\in\{1,\ldots,d_n\} : \lambda_j(A_n) \notin B(S, \varepsilon) \} = O(1),
        \qquad n\to\infty.
    \end{equation*}
    In other words, the number of eigenvalues of $A_n$ outside $B(S,\varepsilon)$ is bounded by a constant independent of $n$. Moreover, $\{A_n\}_n$ is \textit{weakly clustered} at $S$ if $\forall\varepsilon >0$
    \begin{equation*}
        \# \{ j\in\{1,\ldots,d_n\} : \lambda_j(A_n) \notin B(S, \varepsilon) \} = o(d_n),
        \qquad n\to\infty,
    \end{equation*}
    meaning that the number of eigenvalues of $A_n$ outside $B(S,\varepsilon)$ is negligible with respect to the size of the matrix. A corresponding definition can be given for the singular values, with $S\subseteq\R^+$.
\end{definition} 

\begin{remark} \label{remark:distrib-cluster}
    Let us clarify the relationship between the concepts of distribution and clustering. We recall that the essential range of a measurable function $g:D\subseteq\R^t\to\C$ is the set
    \begin{equation*}
        \{ z\in\C : \mu_t \big(\{ g \in B(z, \varepsilon) \} \big)> 0 \quad\forall\varepsilon > 0 \}.
    \end{equation*}
    As reported in \cite[Theorem 4.2]{Golinskii2007}, it holds
    \begin{equation*}
        \{A_n\}_n \sim_\lambda \psi
        \qquad\Longrightarrow\qquad
        \{A_n\}_n \text{ is weakly clustered at the essential range of } \psi.
    \end{equation*}
    Furthermore, if the essential range of $\psi$ consists of a single number $s\in\C$, then
    \begin{equation*}
        \{A_n\}_n \sim_\lambda \psi
        \qquad\iff\qquad
        \{A_n\}_n \text{ is weakly clustered at } s \text{ in the sense of the eigenvalues}.
    \end{equation*}
    The latter case is typically of interest in the context of preconditioning. Corresponding statements can be given for the singular values, with the obvious suitable changes.
\end{remark}

\subsection{The GLT \texorpdfstring{$\ast$}{}-algebra} 

A GLT sequence is a matrix sequence belonging to the $\ast$-algebra generated by the three specific classes of matrix sequences: zero-distributed, Toeplitz and diagonal sampling matrix sequences. We define them in the following paragraphs and they can be seen as the basic building blocks of the GLT $\ast$-algebra.

Each GLT sequence is equipped with a measurable function $\kappa :[0,1]^d \times[-\pi,\pi]^d \to \C$, with $d\geq 1$, called the GLT \emph{symbol}. When $d=1$, we are in the context of unilevel GLT sequences, while $d>1$ relates to the multilevel case. The symbol is essentially unique, in the sense that if $\kappa, \varsigma$ are two symbols of the same  GLT sequence, then $\kappa = \varsigma$ a.e. We write \( \{A_n\}_n \sim_{\mathrm{GLT}} \kappa \) to denote that $\{A_n\}_n$ is a GLT 
sequence with symbol $\kappa$.

\paragraph{Zero-distributed sequences.}
A matrix sequence $\{Z_n\}_n$ is zero-distributed if it holds $\{Z_n\}_n \sim_\sigma 0$. The following Lemma provides a practical characterization.

\begin{lemma}[{\cite[Theorem 3.2]{garoni2017}}] \label{lemma:zero-distr}
    Let $\{Z_n\}_n$ be a matrix sequence, with $Z_n$ of size $d_n\times d_n$. Then, $\{Z_n\}_n \sim_\sigma 0$ if and only if for every $n\in\N$ it holds $Z_n = R_n + N_n$ with 
    \begin{equation*}
        \lim_{n\to\infty} \frac{\rk(R_n)}{d_n} =  \lim_{n\to\infty} \norm{N_n}\to 0 = 0.
    \end{equation*}
\end{lemma}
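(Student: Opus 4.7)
My plan is to prove the two implications separately, with the singular value decomposition (SVD) and a Weyl-type interlacing inequality for singular values as the main tools.

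For the direction $(\Leftarrow)$, I would assume $Z_n = R_n + N_n$ with $r_n := \rk(R_n) = o(d_n)$ and $\|N_n\| \to 0$, and apply the classical bound $\sigma_{i+j-1}(A+B) \leq \sigma_i(A) + \sigma_j(B)$ with $A = N_n$, $B = R_n$, $j = r_n + 1$. Since $\sigma_{r_n+1}(R_n) = 0$, this yields $\sigma_{i+r_n}(Z_n) \leq \sigma_i(N_n) \leq \|N_n\|$ for every $i$, so that at most $r_n$ singular values of $Z_n$ exceed $\|N_n\|$. Given $F \in C_c(\R)$, I would then split $\sum_{i=1}^{d_n} F(\sigma_i(Z_n))$ into the first $r_n$ terms (contributing at most $r_n\|F\|_\infty = o(d_n)$ in total) and the remaining $d_n - r_n$ terms, on which $\sigma_i(Z_n) \in [0, \|N_n\|]$ and continuity of $F$ at $0$ forces $F(\sigma_i(Z_n)) \to F(0)$ uniformly in $i$. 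Dividing by $d_n$ and sending $n \to \infty$ produces precisely the distribution relation corresponding to the symbol $\psi \equiv 0$.

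For $(\Rightarrow)$, I would first extract from $\{Z_n\}_n \sim_\sigma 0$ the weak clustering of singular values at $\{0\}$, i.e. $\#\{i : \sigma_i(Z_n) \geq \varepsilon\}/d_n \to 0$ for every $\varepsilon > 0$. This is obtained by testing the distribution hypothesis against two kinds of functions in $C_c(\R)$: a non-negative bump equal to $1$ on $[\varepsilon, M]$ and supported in $(\varepsilon/2, M+1)$, which counts singular values in $[\varepsilon, M]$; and a function with $F(0) = 1$ equal to $1$ on $[-K, K]$, which forces $\#\{i : \sigma_i(Z_n) > K\}/d_n \to 0$. Writing $Z_n = U_n \Sigma_n V_n^*$ by SVD, for each $\tau > 0$ I would decompose $\Sigma_n = \Sigma_n^{R,\tau} + \Sigma_n^{N,\tau}$ by retaining entries $\geq \tau$ in the first summand and entries $< \tau$ in the second, and set $R_n(\tau) := U_n \Sigma_n^{R,\tau} V_n^*$, $N_n(\tau) := U_n \Sigma_n^{N,\tau} V_n^*$, so that $\rk(R_n(\tau)) = \#\{i : \sigma_i(Z_n) \geq \tau\}$ and $\|N_n(\tau)\| < \tau$.

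The delicate point, which I expect to be the real obstacle, is choosing a single threshold $\tau_n \to 0$ slowly enough that the resulting rank is still $o(d_n)$. A standard diagonal argument does the job: for each integer $k$, pick $n_k$ such that $\#\{i : \sigma_i(Z_n) \geq 1/k\}/d_n < 1/k$ for all $n \geq n_k$ (possible by the weak clustering), with $n_k$ increasing, and set $\tau_n := 1/k$ for $n \in [n_k, n_{k+1})$. Then $R_n := R_n(\tau_n)$ and $N_n := N_n(\tau_n)$ satisfy $\|N_n\| < \tau_n \to 0$ and $\rk(R_n)/d_n < 1/k \to 0$, finishing the proof.
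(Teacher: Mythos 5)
The paper does not prove this lemma at all: it is quoted verbatim from \cite[Theorem 3.2]{garoni2017}, so there is no internal proof to compare against. Your argument is correct and is essentially the standard proof of that result: the $(\Leftarrow)$ direction via the singular value perturbation inequality $\sigma_{i+r_n}(Z_n)\leq \norm{N_n}$ plus splitting the test sum, and the $(\Rightarrow)$ direction via SVD thresholding, where your diagonal choice of thresholds $\tau_n\to 0$ correctly handles the only delicate point (the remaining details --- e.g.\ defining $R_n=Z_n$, $N_n=\mathrm{O}$ for the finitely many $n<n_1$, and matching the constant $K$ of the tail test to the upper endpoint $M$ of the bump --- are routine bookkeeping).
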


\paragraph{Toeplitz sequences.}
For any $n\in\N$, a matrix of the form
\begin{equation*}
    [a_{i-j}]_{i,j = 1}^n \in\C^{n\times n},
\end{equation*}
with coefficients $a_k\in\C$ for $k=1-n,\ldots,n-1$, is called a (unilevel) Toeplitz matrix and it is characterized by constant elements along the diagonals. Given a complex-valued function $f\in L^1\big([-\pi,\pi]\big)$, the $n$-th Toeplitz matrix associated with $f$ is defined as
\begin{equation*}
    T_n(f) := [\hat{f}_{i-j}]_{i,j = 1}^n \in\C^{n\times n},
\end{equation*}
where
\begin{equation*}
    \hat{f}_k = \frac{1}{2\pi}
    \int_{[-\pi,\pi]} f(\theta) e^{-\iu k \theta} \;\mathrm{d}\theta,
    \qquad k\in\Z, \quad \iu^2=-1.
\end{equation*}
are the Fourier coefficients of $f$. The family $\{T_n(f)\}_{n\in\N}$ is the sequence of Toeplitz matrices associated with $f$, called the generating function of the sequence.

We recall that the Wiener class is the sub-algebra of the continuous $2\pi$-periodic functions on $[-\pi,\pi]$ and it corresponds to the set of functions whose Fourier series is absolutely convergent, i.e.,
\begin{equation*}
    \bigg\{ f:[-\pi,\pi]\rightarrow\C \;\colon
    \sum_{k\in\Z} \abs{\hat{f}_k}<+\infty \bigg\}.
\end{equation*}

\paragraph{Diagonal sampling sequences.}
Given $a : [0,1]\to\C$ continuous a.e., for any $n\in\N$ we define the $n$-th diagonal sampling matrix $D_n(a)$ as
\begin{equation*}
    D_n(a) = \diag_{i=1,\ldots,n}
    a\left(\frac{i}{n}\right)
    \in\C^{n\times n}.
\end{equation*}
The family $\{D_n(a)\}_{n\in\N}$ is the sequence of diagonal sampling matrices generated by $a(x)$.

The GLT class satisfies several algebraic and topological properties that are treated in great detail and generality in \cite{barbarino2020uni,barbarino2020multi,garoni2017,garoni2018}. For the purposes of this work, it is sufficient to present them through their operative properties, listed below in the unilevel setting i.e. for $d=1$.

\begin{gltax} \label{glt:distrib}
    If $\{A_n\}_n \sim_\glt \kappa$, then $\{A_n\}_n \sim_\sigma \kappa$ in the sense of Definition \ref{def:distrib}. If moreover each $A_n$ is Hermitian, then $\{A_n\}_n \sim_\lambda \kappa$.
\end{gltax}

\begin{gltax} \label{glt:symbols}
    It holds
    \begin{itemize}
        \item $\{T_n(f)\}_n \sim_\glt \kappa(x,\theta) = f(\theta)$ for any $f\in L^1\big([-\pi,\pi]\big)$;
        \item $\{D_n(a)\}_n \sim_\glt \kappa(x,\theta) = a(x)$ for any $a:[0,1] \to\C$ continuous a.e.;
        \item $\{Z_n\}_n \sim_\glt \kappa(x,\theta) = 0$ if and only if $\{Z_n\}_n \sim_\sigma 0$, i.e., $\{Z_n\}_n$ is zero-distributed.
    \end{itemize}
\end{gltax}

\begin{gltax} \label{glt:algebra}
    If $\{A_n\}_n \sim_\glt \kappa$ and $\{B_n\}_n \sim_\glt \varsigma$, then
    \begin{itemize}
        \item $\{A_n^*\}_n \sim_\glt \bar{\kappa}$, where $A_n^*$ denotes the conjugate transpose of $A_n$ and $\bar{\kappa}$ the complex conjugate of $\kappa$;
        \item $\{\alpha A_n + \beta B_n\}_n \sim_\glt \alpha\kappa +\beta\varsigma$ for any $\alpha,\beta\in\C$;
        \item $\{A_n B_n\}_n \sim_\glt \kappa\varsigma$;
        \item $\{A_n^\dagger\}_n \sim_\glt\kappa^{-1}$ for any $\kappa$ invertible a.e., where $A_n^\dagger$ denotes the Moore-Penrose pseudoinverse of $A_n$.
    \end{itemize}
\end{gltax}

\begin{gltax} \label{glt:kron}
    Let $\{A_n\}_n \sim_\glt \kappa$ and $\{B_m\}_m \sim_\glt \xi$ with
    \begin{align*}
        &\kappa(x_1,\theta_1) : [0,1]\times[-\pi,\pi] \to\C,
        \\
        &\xi(x_2,\theta_2) : [0,1]\times[-\pi,\pi]\to\C.
    \end{align*}
    
    Then, setting $N = N(n,m) := nm$, it holds $\{A_n \otimes B_m\}_N \sim_\glt \kappa\otimes\xi$, where $(\kappa\otimes\xi): [0,1]^2\times[-\pi,\pi]^2\to\C$ is given by
    \begin{equation*}
        (\kappa\otimes\xi)(x_1,x_2,\theta_1,\theta_2) := \kappa(x_1,\theta_1)\xi(x_2,\theta_2),
    \end{equation*}
    i.e. we obtain a twolevel GLT matrix sequence ($d=2$). Notice that the axioms in the $d$-level setting are verbatim the same with $[0,1]$ and $[-\pi,\pi]$ replaced by $[0,1]^d$ and $[-\pi,\pi]^d$, respectively.
\end{gltax}

\section{Discretization and matrix structures} \label{sec:discre}

We now present the discretization of the regularized problem \eqref{eq:quasi-problem}, using a finite difference scheme \cite{Sun2020}. Then we describe with care the global structure of the arising coefficient matrix.


Let $m$ and $n$ be positive integers and denote the corresponding spatial and temporal meshes as
\begin{alignat}{3}
    x_i &= i\Delta x, && \qquad\Delta x = \frac{1}{m+1}, && \qquad i=0,1,\ldots,m, \label{eq:space-mesh} \\
    t_j &= j\Delta t, && \qquad\Delta t=\frac{T}{n}, && \qquad j=0,1,\ldots,n. \label{eq:time-mesh}
\end{alignat}

To deal with the tempered fractional derivative, we use the $L1$ formula:
\begin{equation*}
    {}^T\!D_{L1}^{\xi,\rho} u(x,t_j) = \frac{(\Delta t)^{-\xi}}{\Gamma(2-\xi)} \left[ b_0 u(x,t_j) - \sum_{l=1}^{j-1} \left( b_{j-l-1} - b_{j-l} \right) e^{\rho(t_l-t_j)} u(x,t_l) - b_{j-1} e^{\rho(t_0-t_j)} u(x,t_0) \right],
\end{equation*}
in which
\begin{equation*}
    b_l = (l+1)^{1-\xi}-l^{1-\xi},
    \qquad l = 0,1,\ldots
\end{equation*}
Denoting
\begin{equation*}
    \alpha_n := (\Delta t)^\xi\Gamma(2-\xi)
    = \bigg(\frac{T}{n}\bigg)^\xi\Gamma(2-\xi)
\end{equation*}
and
\begin{equation} 
    \begin{split} \label{eq:time-coeff}
        \gamma_0 &:= b_0, \\
        \gamma_l &:= b_l - b_{l-1} = (l+1)^{1-\xi} - 2l^{1-\xi} + (l-1)^{1-\xi}, \qquad l=1,\ldots,n-1,
    \end{split}
\end{equation}
and using $t_j = j\Delta t$, the $L1$ approximation of the tempered fractional derivative is expressed as
\begin{equation} \label{eq:tem-discre}
    {}^T\!D_{L1}^{\xi,\rho} u(x,t_j) = \frac{1}{\alpha_n} \left[ \gamma_0 u(x,t_j) + \sum_{l=1}^{j-1} \gamma_{j-l} e^{-(j-l)\rho\Delta t} u(x,t_l) - b_{j-1} e^{-j \rho \Delta t} u(x,t_0) \right].
\end{equation}
The following error estimate holds. Assuming that $u(\cdot,t)\in C^2(0,T)$, for $t_j\in (0,T)$ we have
\begin{equation*}
    \abs{{}^T\!D_t^{\xi,\rho} u(x,t)|_{t=t_j}-{}^T\!D_{L1}^{\xi,\rho}u(x,t_j)} \leq \chi_\xi
    \left[ \rho^2 \max_{t_0\leq t\leq t_j} \abs{u(x,t)} + 2\rho \max_{t_0\leq t\leq t_j} \abs{u_t(x,t)} + \Delta t^{2-\xi} \max_{t_0\leq t\leq t_j} \abs{u_{tt}(x,t)} \right],
\end{equation*}
where $u_t$ and $u_{tt}$ denote the first and second partial derivative of $u$ with respect to $t$, and
\begin{equation*}
    \chi_\xi :=  \frac{1}{2\Gamma(1-\xi)} \left[ \frac{1}{4} + \frac{\xi}{(1-\xi)(2-\xi)} \right].
\end{equation*}

To discretize the Caputo fractional derivative, we exploit the spatial counterpart of the $L1$ formula:
\begin{equation} \label{eq:Caputo-discre}
    {}^C\!D_{L1}^\eta u(x_i,t) = \frac{1}{\beta_m} \left[ \delta_0 u(x_i,t) + \sum_{k=1}^{i-1} \delta_{i-k} u(x_k,t) - d_{i-1} u(x_0,t) \right],
\end{equation}
where
\begin{equation*}
    \beta_m := (\Delta x)^\eta \Gamma(2-\eta) = \bigg(\frac{1}{m+1}\bigg)^\eta \Gamma(2-\eta)
\end{equation*} and
\begin{equation} \label{eq:space-coeff}
    \begin{split}
        d_k &:= (k+1)^{1-\eta}-k^{1-\eta},
        \qquad k = 0,1,\ldots \\
        \delta_0 &:= d_0, \\
        \delta_k &:= d_k - d_{k-1} = (k+1)^{1-\eta} - 2k^{1-\eta} + (k-1)^{1-\eta}, \qquad k=1,\ldots,m-1.
    \end{split}
\end{equation}
The corresponding error estimate takes the form
\begin{equation*}
    \abs{{}^C\!D_x^\eta u(x,t)|_{x=x_i}-{}^C\!D_{L1}^\eta u(x_i,t)} \leq
    \frac{1}{2\Gamma(1-\eta)} \left[ \frac{1}{4} + \frac{\eta}{(1-\eta)(2-\eta)} \right] \Delta x^{2-\eta} \max_{0\leq x \leq x_i} \abs{u_{xx}(x,t)},
\end{equation*}
with $u(x,\cdot)\in C^2(0,1)$ and $x_i\in (0,1)$.

Substituting the fractional derivatives in problem \eqref{eq:quasi-problem} with their respective L1 approximations \eqref{eq:tem-discre} and \eqref{eq:Caputo-discre}, and omitting the truncation errors, the finite difference scheme is constructed as follows: in $(x_i,y_j)$ for $i=1,\ldots,m$ and $j=1,\ldots,n$ we have
\begin{equation*}
    \begin{dcases}
        \frac{1}{\alpha_n} \left[ \gamma_0 u_i^{(j)} + \sum_{l=1}^{j-1} \gamma_{j-l} e^{-(j-l)\rho\Delta t} u_i^{(l)} - b_{j-1} e^{-j\rho\Delta t} u_i^{(0)} \right]
        + \frac{a_i}{\beta_m}
        \left[ \delta_0 u_i^{(j)} + \sum_{k=1}^{i-1} \delta_{i-k} u_k^{(j)} - d_{i-1} u_0^{(j)} \right]
        = q^{(j)} f_i,
        \\
        u_0^{(j)} = u_{m+1}^{(j)} = 0,
        \\
        u_i^{(0)} = \varphi_i,
        \\
        u_i^{(n)} = \phi_{\varepsilon,i} - \lambda \frac{a_i}{\beta_m}
        \left[ \delta_0 f_i + \sum_{k=1}^{i-1} \delta_{i-k} f_k - d_{i-1} f_0 \right],
    \end{dcases}
\end{equation*}
where
\begin{alignat*}{3}
    u_i^{(j)} &:= u(x_i,t_j), & \qquad
    a_i &:= a(x_i), & \qquad
    f_i &:= f_{\varepsilon,\lambda}(x_i), 
    \\
    \varphi_i &:= \varphi(x_i), & \qquad
    \phi_{\varepsilon,i} &:= \phi_\varepsilon(x_i), & \qquad
    q^{(j)} &:= q(t_j).
\end{alignat*}
By defining the corresponding vectors
\begin{alignat*}{3}
    \mathbf{u}^{(j)} &:= \begin{bmatrix} u_i^{(j)} \end{bmatrix}_{i=1}^m, & \qquad
    \mathbf{a} &:= \begin{bmatrix} a_i \end{bmatrix}_{i=1}^m, & \qquad
    \mathbf{f} &:= \begin{bmatrix} f_i \end{bmatrix}_{i=1}^m,
    \\
    \boldsymbol{\varphi} &:= \begin{bmatrix} \varphi_i \end{bmatrix}_{i=1}^m, & \qquad
    \boldsymbol{\phi}_\varepsilon &:= \begin{bmatrix} \phi_{\varepsilon,i} \end{bmatrix}_{i=1}^m, 
\end{alignat*}
we can write the numerical scheme in matrix form as
\begin{equation*}
    \begin{dcases}
        \gamma_0 \mathbf{u}^{(j)} + \sum_{l=1}^{j-1} \gamma_{j-l} e^{-(j-l)\rho\Delta t} \mathbf{u}^{(l)} 
        + \frac{\alpha_n}{\beta_m} D_m(a) B_m \mathbf{u}^{(j)}
        - \alpha_n q^{(j)} \mathbf{f}
        = b_{j-1} e^{-j\rho\Delta t} \boldsymbol{\varphi},
        & \quad j=1,\ldots,n, \\
        \mathbf{u}^{(n)} + \frac{\lambda}{\beta_m} D_m(a) B_m \mathbf{f} = \boldsymbol{\phi}_\varepsilon,
    \end{dcases}
\end{equation*}
where $D_m(a)$ is the diagonal sampling matrix of size $m$ associated to $a(x)$, while $B_m$ is the following lower triangular Toeplitz matrix
\begin{equation*}
    B_m := \begin{bmatrix}
        \delta_0 & 0 & \cdots & \cdots & 0 \\
        \delta_1 & \delta_0 & 0 & & \vdots \\
        \delta_2 & \delta_1 & \delta_0 & \ddots & \vdots \\
        \vdots & \ddots & \ddots & \ddots& 0  \\
        \delta_{m-1} & \cdots & \delta_2 & \delta_1 & \delta_0
    \end{bmatrix}.
\end{equation*}
By gathering the equations for all the time levels and the regularization equation, we obtain the all-at-once linear system
\begin{equation} \label{eq:lin-syst}
    A_N\mathbf{v}_\lambda = \mathbf{z}_\lambda,
\end{equation}
where $N = N(n,m) := (n+1)m$ and
\begin{gather*}
    \renewcommand{\arraystretch}{1.5}
    A_N := \left[ \begin{array}{cccc|c}
        \gamma_0 I_m + \frac{\alpha_n}{\beta_m} G_m & \mathrm{O}_m & \cdots & \mathrm{O}_m & -\alpha_n q^{(1)} I_m \\
        \gamma_1 e^{-\rho \Delta t} I_m & \gamma_0 I_m + \frac{\alpha_n}{\beta_m} G_m & \ddots & \vdots & -\alpha_n q^{(2)} I_m \\
        \vdots & \ddots & \ddots & \mathrm{O}_m & \vdots \\
        \gamma_{n-1} e^{-(n-1)\rho\Delta t} I_m & \cdots & \gamma_1 e^{-\rho \Delta t} I_m & \gamma_0 I_m + \frac{\alpha_n}{\beta_m} G_m & -\alpha_n q^{(n)} I_m \\
        \hline
        \mathrm{O}_m & \cdots & \mathrm{O}_m & I_m & \frac{\lambda}{\beta_m} G_m
    \end{array} \right]
    \in \C^{N\times N},
    \\
    \renewcommand{\arraystretch}{1.5}
    \mathbf{v}_{\lambda} :=
        \left[ \begin{array}{c}
        \mathbf{u}^{(1)} \\ \mathbf{u}^{(2)} \\ \vdots \\ \mathbf{u}^{(n)} \\ \hline \mathbf{f}
    \end{array} \right]
    \in\C^N,
    \qquad
    \mathbf{z}_{\lambda} :=
        \left[ \begin{array}{c}
            b_0 e^{-\rho\Delta t} \boldsymbol{\varphi} \\
            b_1 e^{-2\rho\Delta t} \boldsymbol{\varphi} \\
            \vdots \\
            b_{n-1} e^{-n\rho\Delta t} \boldsymbol{\varphi} \\
            \hline \boldsymbol{\phi}_\varepsilon
        \end{array} \right]
        \in\C^N,
\end{gather*}
in which $I_m$ is the $m\times m$ identity matrix, $\mathrm{O}_m$ is the $m\times m$ zero matrix, and
\begin{equation} \label{eq:Gm-def}
    G_m := D_m(a) B_m.
\end{equation}
Moreover, setting
\begin{equation*}
    U_n := \begin{bmatrix}
        \gamma_0 & 0 & \cdots & 0 \\
        \gamma_1 e^{-\rho\Delta t} & \gamma_0 & \ddots & \vdots \\
        \vdots & \ddots & \ddots & 0 \\
        \gamma_{n-1} e^{-\rho(n-1)\Delta t} & \cdots & \gamma_1 e^{-\rho\Delta t} & \gamma_0
    \end{bmatrix},
\end{equation*}
the coefficient matrix can be expressed into the compact form
\begin{equation} \label{eq:coeff-mat}
    \renewcommand{\arraystretch}{1.5}
    A_N = \left[ \begin{array}{c|c}
    	U_n \otimes I_m + I_n\otimes \frac{\alpha_n}{\beta_m} G_m & -\alpha_n \mathbf{q}\otimes I_m \\
        \hline
    	\mathbf{e}_n^T \otimes I_m & \frac{\lambda}{\beta_m} G_m
    \end{array} \right],
\end{equation}
in which
\begin{equation*}
    \mathbf{e}_n := \begin{bmatrix}
        0 \\ \vdots \\ 0 \\ 1
    \end{bmatrix} \in\R^n,
    \qquad
    \mathbf{q} := \begin{bmatrix}
        q^{(1)} \\ q^{(2)} \\ \vdots \\ q^{(n)}
    \end{bmatrix} \in\R^n.
\end{equation*}

\section{GLT analysis} \label{sec:glt-analysis}

This section is devoted to the spectral analysis of the coefficient matrix sequence via the GLT theory, in other to gather information on the asymptotic behavior of the matrices. We start with the sequence associated with the space block $G_m$, then proceed to the time block $U_n$, and conclude with the overall coefficient matrix sequence.

\begin{theorem} \label{thm:GLT-Bm}
    There exists a continuous $2\pi$-periodic function $g_\eta$ such that
    \begin{equation*}
        \{B_m\}_m \sim_{\glt,\sigma} g_\eta(\theta),
        \qquad \theta\in [-\pi,\pi].
    \end{equation*}
    Moreover,
    \begin{equation*}
        \{B_m\}_m \sim_\lambda \delta_0.
    \end{equation*}
\end{theorem}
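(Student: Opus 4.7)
The plan is to recognize $B_m$ as a pure Toeplitz matrix whose symbol lies in the Wiener class, so that both claims follow from the axioms of Section \ref{sec:prelim} together with the triangular structure of $B_m$. Since $(B_m)_{i,j} = \delta_{i-j}$ for $i\geq j$ and $0$ otherwise, the natural candidate symbol is
\[
g_\eta(\theta) := \sum_{k=0}^{\infty} \delta_k\, e^{\iu k\theta},
\]
with $\delta_k$ as in \eqref{eq:space-coeff}, so that its Fourier coefficients satisfy $\hat{g}_{\eta,k} = \delta_k$ for $k\geq 0$ and $\hat{g}_{\eta,k}=0$ for $k<0$. This yields the identification $B_m = T_m(g_\eta)$.

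\textbf{Wiener-class estimate.} The one nontrivial step, and the main potential obstacle, is checking that $\sum_{k\geq 0}\abs{\delta_k}<\infty$. I would observe that $\delta_k$ is the centered second difference of $s\mapsto s^{1-\eta}$ at $s=k$, and a second-order Taylor expansion around $s=k$ together with the estimate on the remainder gives
\[
\delta_k = -\eta(1-\eta)\, k^{-1-\eta} + O\bigl(k^{-3-\eta}\bigr), \qquad k\to\infty.
\]
Since $\eta\in(0,1)$, both contributions are summable tails, so $g_\eta$ belongs to the Wiener class and is in particular continuous and $2\pi$-periodic. GLT \ref{glt:symbols} then yields $\{B_m\}_m \sim_\glt g_\eta(\theta)$, and GLT \ref{glt:distrib} gives also $\{B_m\}_m \sim_\sigma g_\eta$, settling the first assertion.

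\textbf{Eigenvalue distribution.} For the second assertion, I would exploit that $B_m$ is lower triangular with every diagonal entry equal to $\delta_0$, so all $m$ eigenvalues of $B_m$ coincide with $\delta_0$. Consequently, for any $F\in C_c(\C)$,
\[
\frac{1}{m}\sum_{i=1}^{m} F\bigl(\lambda_i(B_m)\bigr) = F(\delta_0),
\]
which matches the right-hand side of Definition \ref{def:distrib} for the constant symbol $\psi\equiv \delta_0$. This proves $\{B_m\}_m\sim_\lambda \delta_0$ and closes the argument. The whole proof thus reduces to the summability estimate for $\{\delta_k\}_k$, obtained via elementary calculus, with everything else being a direct application of the GLT axioms and of the triangular structure.
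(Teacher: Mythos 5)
Your proposal is correct and follows essentially the same route as the paper: identify $B_m = T_m(g_\eta)$ with $g_\eta(\theta)=\sum_{k\geq 0}\delta_k e^{\iu k\theta}$, verify membership in the Wiener class via the asymptotics $\delta_k \sim \eta(\eta-1)k^{-1-\eta}$, invoke GLT \ref{glt:symbols} and GLT \ref{glt:distrib}, and read off the eigenvalue distribution from the lower triangular structure with constant diagonal $\delta_0$. The only (cosmetic) difference is that you obtain the decay of $\delta_k$ by a direct Taylor expansion of $s\mapsto s^{1-\eta}$ at $s=k$, while the paper rescales through the spatial mesh and a central finite difference quotient before reaching the same estimate.
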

\begin{proof}
    To demonstrate that $\{B_m\}_m$ is a GLT sequence, we claim that $B_m$ is a Toeplitz matrix generated by a measurable function, i.e., there exists $g_\eta$ such that $B_m = T_m(g_\eta)$ for all $m\in\N$. The  idea is simple: we prove that the series
    \begin{equation*}
        g_\eta(\theta) := \sum_{k=0}^{\infty} \delta_k e^{\iu k\theta},
    \end{equation*}
    where the coefficients $\delta_k$ are given by \eqref{eq:space-coeff}, belongs to the Wiener class, so that $g_\eta$ is well defined, continuous and $2\pi$-periodic, and the corresponding Toeplitz matrix sequence is precisely $\{B_m\}_m$ by definition.
    
    Let us consider the auxiliary function $\psi_\eta(x) := x^{1-\eta}$. From \eqref{eq:space-mesh}, for any index $k$ we have $k = (m+1)x_k$. Hence,
    \begin{align*}
        \delta_k &= (k+1)^{1-\eta} - 2k^{1-\eta} + (k-1)^{1-\eta}
        \\
        &= (m+1)^{1-\eta} \big[ x_{k+1}^{1-\eta} - 2 x_k^{1-\eta} + x_{k-1}^{1-\eta} \big]
        \\
        &= (m+1)^{1-\eta} \big[ \psi_\eta(x_{k+1}) - 2\psi_\eta(x_k) + \psi_\eta(x_{k-1}) \big]
        \\
        &= (m+1)^{1-\eta} \left[ \frac{ \psi_\eta(x_{k+1}) - 2\psi_\eta(x_k) + \psi_\eta(x_{k-1}) }{\Delta x^2} \right] \Delta x^2
        \\
        &= \frac{1}{(m+1)^{1+\eta}} \big[ \psi_\eta''(x_k) + O(\Delta x^2) \big] \qquad \text{as } \Delta x\to 0,
    \end{align*}
    where we used a standard second order central finite difference scheme and the fact that $\Delta x = \frac{1}{m+1}$. From
    \begin{equation*}
        \psi_\eta''(x) = \eta(\eta-1) \frac{1}{x^{1+\eta}},
    \end{equation*}
    we have
    \begin{align*}
        \delta_k
        &= \frac{\eta(\eta-1)}{(m+1)^{1+\eta}} \cdot  \frac{1}{x_k^{1+\eta}} + \Delta x^{1+\eta} O(\Delta x^2) \\
        &= \frac{\eta(\eta-1)}{(m+1)^{1+\eta}} \cdot \frac{(m+1)^{1+\eta}}{k^{1+\eta}} + O(\Delta x^{3+\eta}) \\
        &= \eta(\eta-1) \frac{1}{k^{1+\eta}} + O(\Delta x^{3+\eta}).
    \end{align*}
    Since $\eta\in (0,1)$, we deduce that the series
    \begin{equation*}
        \sum_{k=0}^{\infty} \abs{\delta_k}
    \end{equation*}
    is convergent, and consequently $g_\eta$ belongs to the Wiener class. Therefore, $B_m = T_m(g_\eta)$, so that GLT \ref{glt:symbols} and GLT \ref{glt:distrib} imply
    \begin{equation*}
        \{B_m\}_m \sim_{\glt,\sigma} g_\eta(\theta).
    \end{equation*}
    Finally, all the eigenvalues of $B_m$ are trivially equal to $\delta_0$ because of the triangular structure, so that 
    \begin{equation*}
        \{B_m\}_m \sim_\lambda \delta_0
    \end{equation*}
    follows immediately from the definition of distribution.
\end{proof}

\begin{corollary}
    If $a:[0,1]^2\to\R$ is continuous a.e, then
    \begin{equation*}
        \{G_m\}_m  \sim_{\glt,\sigma} a(x) g_\eta(\theta),
        \qquad x\in [0,1], \;\theta\in [-\pi,\pi].
    \end{equation*}
    with $g_\eta$ as in Theorem \ref{thm:GLT-Bm}. Moreover,
    \begin{equation*}
        \{G_m\}_m  \sim_\lambda \delta_0 a(x),
        \qquad x\in [0,1].
    \end{equation*}
\end{corollary}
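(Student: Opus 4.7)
The plan is to prove the two assertions separately, since the GLT/singular value statement follows directly from the axioms while the eigenvalue statement requires exploiting the triangular structure.

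First, I would handle the GLT and singular value distribution. By Theorem \ref{thm:GLT-Bm} we have $\{B_m\}_m \sim_\glt g_\eta(\theta)$, and GLT \ref{glt:symbols} yields $\{D_m(a)\}_m \sim_\glt a(x)$ because $a$ is continuous a.e. The product property in GLT \ref{glt:algebra} then gives
\begin{equation*}
    \{G_m\}_m = \{D_m(a) B_m\}_m \sim_\glt a(x) g_\eta(\theta),
\end{equation*}
and applying GLT \ref{glt:distrib} immediately promotes this to a singular value distribution with the same symbol.

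For the eigenvalue part, I would exploit the fact that $B_m$ is lower triangular with constant diagonal entries equal to $\delta_0$, while $D_m(a)$ is diagonal. Therefore $G_m = D_m(a) B_m$ is itself lower triangular, with diagonal entries $\delta_0\, a(x_i)$, $i=1,\ldots,m$, where $x_i$ are the sample points used by the diagonal sampling matrix. In particular the spectrum of $G_m$ coincides, counting multiplicities, with that of the Hermitian matrix $\delta_0 D_m(a)$. Since $\{\delta_0 D_m(a)\}_m \sim_\glt \delta_0 a(x)$ by GLT \ref{glt:symbols}, the Hermitian clause of GLT \ref{glt:distrib} yields $\{\delta_0 D_m(a)\}_m \sim_\lambda \delta_0 a(x)$, and hence the same eigenvalue distribution transfers to $\{G_m\}_m$. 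As an alternative one can argue by hand: for any $F\in C_c(\C)$,
\begin{equation*}
    \frac{1}{m} \sum_{i=1}^m F\big(\lambda_i(G_m)\big)
    = \frac{1}{m} \sum_{i=1}^m F\big(\delta_0\, a(x_i)\big)
    \;\longrightarrow\; \int_0^1 F\big(\delta_0\, a(x)\big) \,\mathrm{d} x,
\end{equation*}
where the limit is a standard Riemann-sum convergence, valid because $F\circ(\delta_0 a)$ is bounded and continuous a.e.

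I do not anticipate any serious obstacle: the only point demanding attention is that $G_m$ is non-Hermitian, which prevents a direct application of the Hermitian clause of GLT \ref{glt:distrib} to $\{G_m\}_m$ itself. The triangular structure bypasses this issue by making the spectrum transparent and identical to that of an explicitly Hermitian diagonal matrix, and this same observation is what forces the essential range of $a(x) g_\eta(\theta)$ and that of $\delta_0 a(x)$ to play different roles in the two parts of the statement.
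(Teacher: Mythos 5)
Your proposal is correct and follows essentially the same route as the paper: the GLT/singular value part via GLT \ref{glt:symbols}, GLT \ref{glt:algebra} and GLT \ref{glt:distrib} applied to $G_m = D_m(a)B_m$, and the eigenvalue part by observing that $G_m$ is lower triangular with diagonal $\delta_0 a(x_i)$, so its spectrum is a sampling of $\delta_0 a(x)$. Your extra justification of the eigenvalue distribution (via the Hermitian matrix $\delta_0 D_m(a)$ or the Riemann-sum limit) only makes explicit what the paper leaves implicit.
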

\begin{proof}
    Recalling \eqref{eq:Gm-def}, by GLT \ref{glt:symbols} we have
    \begin{equation*}
        \{D_m(a)\}_m \sim_\glt a(x), \qquad x\in [0,1].
    \end{equation*}
    Then, by Theorem \ref{thm:GLT-Bm} and the $\ast$-algebra properties detailed in GLT \ref{glt:algebra}, we deduce
    \begin{equation*}
        \{G_m\}_m \sim_\glt a(x)g_\eta(\theta),
        \qquad x\in [0,1], \;\theta\in [-\pi,\pi].
    \end{equation*}
    By GLT \ref{glt:distrib}, we immediately have
    \begin{equation*}
        \{G_m\}_m \sim_\sigma a(x)g_\eta(\theta),
        \qquad x\in [0,1], \;\theta\in [-\pi,\pi].
    \end{equation*}
    Finally, we notice that $G_m$ is lower triangular and the main diagonal is given by $\begin{bmatrix} \delta_0 a(x_i) \end{bmatrix}_{i=1}^m$. In conclusion
    \begin{equation*}
        \{G_m\}_m \sim_\lambda \delta_0 a(x),
        \qquad x\in [0,1].
    \end{equation*}
\end{proof}

We proceed to the analysis of $\{U_n\}_n$, relying on the same technique employed in the proof of Theorem \ref{thm:GLT-Bm}.

\begin{theorem} \label{thm:GLT-Un}
    There exists a continuous $2\pi$-periodic function $h_\xi$ such that
    \begin{equation*}
        \{U_n\}_n \sim_{\glt,\sigma} h_\xi(\theta),
        \qquad \theta\in [-\pi,\pi].
    \end{equation*}
    Moreover,
    \begin{equation*}
        \{U_n\}_n \sim_\lambda \gamma_0.
    \end{equation*}
\end{theorem}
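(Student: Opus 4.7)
The plan is to adapt the argument of Theorem~\ref{thm:GLT-Bm}, with one extra step to handle the damping factors $e^{-\rho l\Delta t}$ that prevent $U_n$ from being a plain Toeplitz matrix. Define
\[
    h_\xi(\theta) := \sum_{k=0}^\infty \gamma_k e^{\iu k\theta},
\]
with $\gamma_k$ as in \eqref{eq:time-coeff}. Applying verbatim the finite-difference expansion used in the proof of Theorem~\ref{thm:GLT-Bm} to the auxiliary function $\psi_\xi(t):=t^{1-\xi}$ yields the asymptotic
\[
    \gamma_k = \xi(\xi-1)\,k^{-(1+\xi)} + O\!\bigl(k^{-(3+\xi)}\bigr), \qquad k\to\infty,
\]
so $\sum_{k\ge 0}|\gamma_k|<\infty$ since $\xi\in(0,1)$, i.e. $h_\xi$ belongs to the Wiener class. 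Hence $\{T_n(h_\xi)\}_n\sim_\glt h_\xi(\theta)$ by GLT~\ref{glt:symbols}.

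The next step is to absorb the damping factors into a zero-distributed perturbation. Writing $U_n = T_n(h_\xi) + Z_n$, the residual $Z_n$ is the lower triangular Toeplitz matrix with $(i,j)$-entry $\gamma_{i-j}\bigl(e^{-\rho(i-j)\Delta t}-1\bigr)$ for $i\ge j$, and is generated by
\[
    \tilde h_n(\theta) := \sum_{k=0}^{n-1} \gamma_k \bigl(e^{-\rho k\Delta t}-1\bigr)\,e^{\iu k\theta}.
\]
Since $\|Z_n\|\le \|\tilde h_n\|_\infty \le \sum_{k=0}^{n-1}|\gamma_k|\,|e^{-\rho k\Delta t}-1|$, I would split the sum at a threshold $k_0\sim n^{1/2}$: for $k\le k_0$ use $|e^{-x}-1|\le x$ together with $\Delta t=T/n$, and for $k>k_0$ use $|e^{-x}-1|\le 2$ with the heavy-tailed summability $|\gamma_k|\sim k^{-(1+\xi)}$. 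Both pieces vanish as $n\to\infty$, giving $\|Z_n\|\to 0$. Lemma~\ref{lemma:zero-distr} applied with $R_n=0$ and $N_n=Z_n$ then yields $\{Z_n\}_n\sim_\sigma 0$, that is $\{Z_n\}_n\sim_\glt 0$ by GLT~\ref{glt:symbols}.

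Combining the two pieces via the $\ast$-algebra property GLT~\ref{glt:algebra} gives $\{U_n\}_n\sim_\glt h_\xi(\theta)$, and then GLT~\ref{glt:distrib} provides the singular value distribution. The eigenvalue statement is immediate from the structure of the matrix: $U_n$ is lower triangular with the constant value $\gamma_0$ on the main diagonal, hence all its eigenvalues equal $\gamma_0$ and Definition~\ref{def:distrib} directly gives $\{U_n\}_n\sim_\lambda\gamma_0$.

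The main obstacle is the norm bound on $Z_n$, because the factor $|e^{-\rho k\Delta t}-1|$ is only $O(k/n)$ for $k$ small compared to $n$ and is otherwise merely bounded by a constant, while the tail decay $|\gamma_k|\sim k^{-(1+\xi)}$ is only marginally summable for $\xi$ close to $0$. The split-sum argument above is precisely what balances these two regimes and closes the estimate uniformly for every $\xi\in(0,1)$.
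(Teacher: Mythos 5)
Your proof is correct, but it takes a genuinely different route from the paper. The paper absorbs the tempering factors into the symbol itself, defining $h_\xi(\theta) := \sum_{k\ge 0}\gamma_k e^{-\rho k\Delta t}e^{\iu k\theta}$ and asserting $U_n = T_n(h_\xi)$ exactly, then checking membership in the Wiener class; the cost is that this ``symbol'' formally depends on $\Delta t = T/n$, hence on $n$, which sits uneasily with the requirement that a generating function (and a GLT symbol) be a fixed, $n$-independent object. You instead take the undamped series $h_\xi(\theta)=\sum_{k\ge 0}\gamma_k e^{\iu k\theta}$ as the symbol, write $U_n = T_n(h_\xi)+Z_n$, and show $\norm{Z_n}\to 0$ via the bound $\norm{Z_n}\le\sum_{k=0}^{n-1}\abs{\gamma_k}\,\abs{e^{-\rho k\Delta t}-1}$ and the split at $k_0\sim n^{1/2}$ (which indeed closes: the near regime gives $O\bigl(k_0^{1-\xi}/n\bigr)$ and the far regime $O\bigl(k_0^{-\xi}\bigr)$, both vanishing for every $\xi\in(0,1)$), so that Lemma~\ref{lemma:zero-distr} and the $\ast$-algebra deliver the same GLT symbol. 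Your version buys a cleaner, fully $n$-independent symbol at the price of the extra perturbation estimate; the paper's version is shorter but sweeps the $n$-dependence of the damping under the rug. One small wording point: $U_n$ \emph{is} a Toeplitz matrix for each $n$ (its entries depend only on $i-j$); what the damping prevents is that the sequence $\{U_n\}_n$ be generated by a single $n$-independent function, which is exactly the issue your decomposition addresses. The eigenvalue part (lower triangular, constant diagonal $\gamma_0$) coincides with the paper's argument.
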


\begin{proof}
    We follow the proof of Theorem \ref{thm:GLT-Bm}, showing that there exists $h_\xi$ such that $U_n = T_n(h_\xi)$ for all $n\in\N$. We prove that the series    \begin{equation*}
        h_\xi(\theta) := \sum_{k=0}^{\infty} \gamma_k e^{-\rho k \Delta t} e^{\iu k\theta},
    \end{equation*}
    with $\gamma_k$ defined by \eqref{eq:time-coeff}, belongs to the Wiener class. Considering $\psi_\xi(t) := t^{1-\xi}$ and using \eqref{eq:time-mesh}, we have
    \begin{align*}
        \gamma_k &= (k+1)^{1-\xi} - 2k^{1-\xi} + (k-1)^{1-\xi}
        \\
        &= \left( \frac{n}{T} \right)^{1-\xi} \left[ t_{k+1}^{1-\xi} - 2 t_k^{1-\xi} + t_{k-1}^{1-\xi} \right]
        \\
        &= \left( \frac{n}{T} \right)^{1-\xi} \big[ \psi_\xi(t_{k+1}) - 2\psi_\xi(t_k) + \psi_\xi(t_{k-1}) \big]
        \\
        &= \left( \frac{n}{T} \right)^{1-\xi} \left[ \frac{ \psi_\xi(t_{k+1}) - 2\psi_\xi(t_k) + \psi_\xi(t_{k-1}) }{\Delta t^2} \right] \Delta t^2
        \\
        &= \left(\frac{T}{n}\right)^{1+\xi} \big[ \psi_\xi''(t_k) + O(\Delta t^2) \big] \qquad \text{as } k\to\infty,
    \end{align*}
    using $\Delta t = \frac{T}{n}$ and a standard central finite difference scheme. From
    \begin{equation*}
        \psi_\xi''(x) = \xi(\xi-1) \frac{1}{t^{1+\xi}},
    \end{equation*}
    we have
    \begin{align*}
        \gamma_k
        &= \left(\frac{T}{n}\right)^{1+\xi} \xi(\xi-1) \frac{1}{t_k^{1+\xi}} + \Delta t^{1+\xi} O(\Delta t^2) \\
        &= \left(\frac{T}{n}\right)^{1+\xi} \frac{\xi(\xi-1)}{k^{1+\xi}} \left(\frac{n}{T}\right)^{1+\xi} + O(\Delta t^{3+\xi}) \\
        &= \frac{\xi(\xi-1)}{k^{1+\xi}} + O(\Delta t^{3+\xi}).
    \end{align*}
    Since $\xi\in (0,1)$ and $\rho>0$, the series
    \begin{equation*}
        \sum_{k=0}^{\infty} \abs{\gamma_k e^{-\rho k \Delta t}}
    \end{equation*}
    is convergent, and $h_\xi$ belongs to the Wiener class by definition. Hence, $U_n = T_n(h_\xi)$ and by GLT \ref{glt:symbols} and GLT \ref{glt:distrib} we conclude
    \begin{equation*}
        \{U_n\}_n \sim_{\glt,\sigma} h_\xi(\theta),
    \end{equation*}
    while the eigenvalue distribution is obvious from the triangular structure.
\end{proof}

We are now ready to compute the GLT symbol of the sequence associated with the full coefficient matrix. To provide meaningful results, we introduce a mild assumption regarding the rate at which the discretization parameters tend to zero with respect to one another.

\begin{theorem} \label{thm:coeff-mat-sym}
    With $A_N$ defined in \eqref{eq:coeff-mat}, assume that
    \begin{equation*}
        \frac{\alpha_n}{\beta_m} \to \nu, \qquad \text{as } n,m\to\infty,
    \end{equation*}
    with $\nu\neq 0$. Then, it holds
    \begin{equation*}
        \{A_N\}_N \sim_{\glt,\sigma} h_\xi(\theta_1) + \nu a(x)g_\eta(\theta_2), \qquad (\theta_1,\theta_2)\in [-\pi,\pi]^2, \; x\in [0,1],
    \end{equation*}
    where $h_\xi$ and $g_\eta$ are given in Theorems \ref{thm:GLT-Bm}-\ref{thm:GLT-Un}.
\end{theorem}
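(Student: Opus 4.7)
The plan is to decompose $A_N = \tilde{A}_N + R_N$, where $\tilde{A}_N$ has a clean two-level Kronecker structure amenable to the GLT axioms and $R_N$ is a correction supported only on the last block row and the last block column of $A_N$, hence of rank at most $2m$, which is negligible with respect to $N = (n+1)m$ as $n \to \infty$.

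Concretely, extend $U_n$ to the $(n+1) \times (n+1)$ lower triangular Toeplitz matrix $\tilde{U}_{n+1} := T_{n+1}(h_\xi)$ obtained by appending one more row and column of the same Toeplitz pattern, and set
\[
    \tilde{A}_N := \tilde{U}_{n+1} \otimes I_m + I_{n+1} \otimes \frac{\alpha_n}{\beta_m} G_m.
\]
A block-by-block comparison shows that $A_N$ and $\tilde{A}_N$ coincide on every $(i,j)$ block with $1 \leq i, j \leq n$, and differ only on the last block row and last block column; therefore $\rk(R_N) \leq 2m$ and $\rk(R_N)/N \leq 2/(n+1) \to 0$. Lemma \ref{lemma:zero-distr} (taking $N_n = 0$) then gives $\{R_N\}_N \sim_\sigma 0$, which lifts to $\{R_N\}_N \sim_\glt 0$ via the third bullet of GLT \ref{glt:symbols}.

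For $\tilde{A}_N$, Theorem \ref{thm:GLT-Un} together with the representation $U_n = T_n(h_\xi)$ from its proof yields $\{\tilde{U}_{n+1}\}_n \sim_\glt h_\xi(\theta_1)$, so GLT \ref{glt:kron} delivers $\{\tilde{U}_{n+1} \otimes I_m\}_N \sim_\glt h_\xi(\theta_1)$ as a two-level GLT sequence. For the second summand, the corollary following Theorem \ref{thm:GLT-Bm} gives $\{G_m\}_m \sim_\glt a(x) g_\eta(\theta_2)$, while the uniform bound $\norm{G_m} \leq \norm{a}_\infty \norm{g_\eta}_\infty$ (via $B_m = T_m(g_\eta)$ with $g_\eta$ in the Wiener class) combined with $\alpha_n/\beta_m \to \nu$ shows that $\big( \tfrac{\alpha_n}{\beta_m} - \nu \big) G_m$ vanishes in spectral norm, hence is zero-distributed. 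GLT \ref{glt:algebra} and GLT \ref{glt:kron} then produce $\{I_{n+1} \otimes \tfrac{\alpha_n}{\beta_m} G_m\}_N \sim_\glt \nu a(x) g_\eta(\theta_2)$, and summing via GLT \ref{glt:algebra} gives
\[
    \{\tilde{A}_N\}_N \sim_\glt h_\xi(\theta_1) + \nu a(x) g_\eta(\theta_2).
\]
Adding the GLT-zero perturbation $R_N$ preserves the symbol by GLT \ref{glt:algebra}, and the singular value part of the statement then follows from GLT \ref{glt:distrib}.

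The main subtlety I anticipate concerns the joint dependence on the two indices $n$ and $m$: the scalar $\alpha_n/\beta_m$ and the correction $R_N$ both involve $n$ and $m$ simultaneously, whereas the GLT algebra axioms are typically stated for sequences indexed by a single index. I plan to handle this by viewing $\{A_N\}_N$ along admissible joint paths in which $n, m \to \infty$ consistently with $\alpha_n/\beta_m \to \nu$, leaning on GLT \ref{glt:kron}, which naturally accommodates two-index sequences within the two-level GLT framework.
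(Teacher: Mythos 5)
Your proposal is correct and follows essentially the same route as the paper: both reduce $A_N$ to the two-level structure $T_{n+1}(h_\xi)\otimes I_m + I_{n+1}\otimes \frac{\alpha_n}{\beta_m}G_m$ plus corrections of rank $O(m)=o(N)$, which are zero-distributed and hence GLT with symbol $0$, and then invoke GLT \ref{glt:symbols}--\ref{glt:kron} and GLT \ref{glt:distrib}; the only difference is that the paper splits the correction into three pieces $C_N^{(1)},C_N^{(2)},C_N^{(3)}$ while you collect it into a single rank-$\leq 2m$ term, and you spell out the step $\big(\tfrac{\alpha_n}{\beta_m}-\nu\big)G_m\sim_\sigma 0$ that the paper uses implicitly.
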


\begin{proof}
We begin by decomposing \eqref{eq:coeff-mat} as
\begin{equation*}
    \renewcommand{\arraystretch}{1.5}
    A_N = \underbrace{ \left[ \begin{array}{cc}
    	U_n \otimes I_m & \mathrm{O}_{nm\times m} \\
    	\mathrm{O}_{m\times nm} & \mathrm{O}_m
    \end{array} \right] }_{=:\, C_N^{(1)}}
    + \underbrace{ \left[ \begin{array}{cc}
    	I_n\otimes \frac{\alpha_n}{\beta_m} G_m & \mathrm{O}_{nm\times m} \\
    	\mathrm{O}_{m\times nm} & \frac{\lambda}{\beta_m} G_m
    \end{array} \right] }_{=:\, C_N^{(2)}}
    + \underbrace{ \left[ \begin{array}{cc}
    	\mathrm{O}_{nm} & -\alpha_n\mathbf{q}\otimes I_m \\
    	\mathbf{e}_n^T \otimes I_m & \mathrm{O}_m
    \end{array} \right] }_{=:\, C_N^{(3)}},
\end{equation*}
so we can focus on each component individually. For the first one, we have
\begin{equation*}
    \renewcommand{\arraystretch}{1.3}
    C_N^{(1)}
    = U_{n+1} \otimes I_m
    - \underbrace{\left[ \begin{array}{c|c}
    	\mathrm{O}_{nm} & \mathrm{O}_{nm\times m} \\
    	\hline
    	\begin{matrix}
    	    \gamma_n e^{-\rho n\Delta t} I_m & \cdots & \gamma_1 e^{-\rho\Delta t} I_m
    	\end{matrix} & \gamma_0 I_m
    \end{array} \right]}_{=: R^{(1)}_N},
\end{equation*}
where $\rk R^{(1)}_N \leq m = o(N)$ as $n,m\to\infty$. By Lemma \ref{lemma:zero-distr}, $\{R^{(1)}_N\}_N$ is zero-distributed, hence by GLT \ref{glt:symbols}
\begin{equation*}
    \{R^{(1)}_N\}_N\sim_\glt 0.
\end{equation*}
Moreover, we have $U_{n+1} \otimes I_m = T_{n+1}(h_\xi) \otimes T_m(1)$, therefore applying GLT \ref{glt:symbols}--\ref{glt:kron} leads to
\begin{equation} \label{eq:S1}
    \big\{C_N^{(1)}\big\}_N \sim_\glt h_\xi(\theta_1).
\end{equation}

Concerning $C_N^{(2)}$, we notice that
\begin{equation*}
    C_N^{(2)} = \left[ \begin{array}{cc}
    	I_n\otimes \frac{\alpha_n}{\beta_m} G_m & \mathrm{O}_{nm\times m} \\
    	\mathrm{O}_{m\times nm} & \frac{\alpha_n}{\beta_m} G_m + \frac{\lambda-\alpha_n}{\beta_m} G_m
    \end{array} \right]
    = I_{n+1} \otimes \frac{\alpha_n}{\beta_m} G_m +
    \underbrace{\left[ \begin{array}{cc}
    	\mathrm{O}_{nm} & \mathrm{O}_{nm\times m} \\
    	\mathrm{O}_{m\times nm} & \frac{\lambda-\alpha_n}{\beta_m} G_m
    \end{array} \right]}_{=: R_N^{(2)}},
\end{equation*}
in which once again $\rk R_N^{(2)} = m = o(N)$ as $n,m\to\infty$, so that
\begin{equation*}
    \{R_N^{(2)}\}_N\sim_\glt 0.
\end{equation*}
On the other hand, with the assumption that $\frac{\alpha_n}{\beta_m}\to \nu$, by GLT \ref{glt:symbols} and GLT \ref{glt:kron} we have
\begin{equation*}
    \left\{I_{n+1} \otimes \frac{\alpha_n}{\beta_m} G_m\right\}_N = \left\{ T_{n+1}(1) \otimes \frac{\alpha_n}{\beta_m} D_m(a)T_m(g_\eta) \right\}_N \sim_\glt \nu a(x) g_\eta(\theta_2).
\end{equation*}
We conclude by GLT \ref{glt:algebra} that
\begin{equation} \label{eq:S2}
    \big\{C_N^{(2)}\big\}_N \sim_\glt \nu a(x) g_\eta(\theta_2).
\end{equation}

Finally, we have
\begin{equation*}
    C_N^{(3)} = \left[ \begin{array}{cc}
    	\mathrm{O}_{nm} & -\alpha_n\mathbf{q}\otimes I_m \\
    	\mathrm{O}_{m\times nm} & \mathrm{O}_m
    \end{array} \right]
    + \left[ \begin{array}{cc}
    	\mathrm{O}_{nm} & \mathrm{O}_{nm\times m} \\
    	\mathbf{e}_n^T \otimes I_m & \mathrm{O}_m
    \end{array} \right],
\end{equation*}
where each of these components has rank at most $m=o(N)$. Therefore, 
\begin{equation} \label{eq:S3}
    \big\{C_N^{(3)}\big\}_N \sim_\glt 0.
\end{equation}

By summing the contributions in \eqref{eq:S1}, \eqref{eq:S2} and \eqref{eq:S3}, we find the GLT symbol of the overall coefficient matrix sequence $\{A_N\}_N$ via GLT \ref{glt:algebra}, and then the singular value distribution via GLT \ref{glt:distrib}.

\end{proof}

\section{GLT preconditioning} \label{sec:precond}

In the present section, we propose preconditioners $P_N$ for the linear system \eqref{eq:lin-syst}, with the aim of clustering at 1 the eigenvalues of $\{ P_N^{-1} A_N \}_N $. Let us define
\begin{equation} \label{eq:precond}
    \renewcommand{\arraystretch}{1.5}
    P_N := \left[ \begin{array}{c|c}
    	U_n \otimes I_m + I_n\otimes \frac{\alpha_n}{\beta_m} G_m & \mathrm{O}_{mn \times m} \\
        \hline
    	\mathbf{e}_n^T \otimes I_m & \frac{\lambda}{\beta_m} G_m
    \end{array} \right].
\end{equation}
Note that both $U_n$ and $G_m$ are invertible, in consequence of their lower triangular structure with non-zero diagonal elements, and the same is true for $U_n \otimes I_m + I_n\otimes \frac{\alpha_n}{\beta_m} G_m$, which is also lower triangular. Because of the block triangular structure, the eigenvalues of $P_N$ are the eigenvalues of the diagonal blocks, therefore $P_N$ is invertible.

The following theorem proves that $\{P_N\}_N$ has the same GLT symbol, and hence the same singular value distribution, as the coefficient matrix sequence. 

\begin{theorem} \label{thm:precond-sym}
    With $P_N$ defined in \eqref{eq:precond}, assume that
    \begin{equation*}
        \frac{\alpha_n}{\beta_m} \to \nu, \qquad \text{as } n,m\to\infty,
    \end{equation*}
    with $\nu\neq 0$. Then, it holds
    \begin{equation*}
        \{P_N\}_N \sim_{\glt,\sigma} h_\xi(\theta_1) + \nu a(x)g_\eta(\theta_2), \qquad (\theta_1,\theta_2)\in [-\pi,\pi]^2, \; x\in [0,1],
    \end{equation*}
    where $h_\xi$ and $g_\eta$ are given in Theorems \ref{thm:GLT-Bm}-\ref{thm:GLT-Un}.
\end{theorem}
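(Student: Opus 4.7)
The plan is to mirror the decomposition strategy employed in the proof of Theorem \ref{thm:coeff-mat-sym}, exploiting the fact that $P_N$ differs from $A_N$ only in the top-right block (which is replaced by a zero block) while the rest of the structure is identical. In particular, I would write
\begin{equation*}
    \renewcommand{\arraystretch}{1.5}
    P_N = \underbrace{\left[ \begin{array}{cc}
        U_n \otimes I_m & \mathrm{O}_{nm\times m} \\
        \mathrm{O}_{m\times nm} & \mathrm{O}_m
    \end{array} \right]}_{=\, C_N^{(1)}}
    + \underbrace{\left[ \begin{array}{cc}
        I_n\otimes \frac{\alpha_n}{\beta_m} G_m & \mathrm{O}_{nm\times m} \\
        \mathrm{O}_{m\times nm} & \frac{\lambda}{\beta_m} G_m
    \end{array} \right]}_{=\, C_N^{(2)}}
    + \underbrace{\left[ \begin{array}{cc}
        \mathrm{O}_{nm} & \mathrm{O}_{nm\times m} \\
        \mathbf{e}_n^T \otimes I_m & \mathrm{O}_m
    \end{array} \right]}_{=:\, \widetilde{C}_N^{(3)}},
\end{equation*}
where $C_N^{(1)}$ and $C_N^{(2)}$ are precisely the two blocks already analysed in the proof of Theorem \ref{thm:coeff-mat-sym}, and only the third term differs from the one appearing there.

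The first step is to recall that the arguments for $C_N^{(1)}$ and $C_N^{(2)}$ go through unchanged, yielding
\begin{equation*}
    \big\{C_N^{(1)}\big\}_N \sim_\glt h_\xi(\theta_1),
    \qquad
    \big\{C_N^{(2)}\big\}_N \sim_\glt \nu\, a(x)\, g_\eta(\theta_2),
\end{equation*}
exactly as in equations \eqref{eq:S1}--\eqref{eq:S2}. The second step is to handle $\widetilde{C}_N^{(3)}$: since $\widetilde{C}_N^{(3)}$ has at most $m = o(N)$ nonzero rows, its rank is bounded by $m$, so Lemma \ref{lemma:zero-distr} (with $N_n = 0$) gives $\{\widetilde{C}_N^{(3)}\}_N \sim_\sigma 0$, and hence $\{\widetilde{C}_N^{(3)}\}_N \sim_\glt 0$ by the third bullet of GLT \ref{glt:symbols}. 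The final step is to sum the three contributions via GLT \ref{glt:algebra} to obtain the GLT symbol of $\{P_N\}_N$, and then to invoke GLT \ref{glt:distrib} to convert the GLT symbol into the desired singular value distribution.

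There is essentially no hard part here: the argument is a cosmetic variation of the proof of Theorem \ref{thm:coeff-mat-sym}, and the only new observation is that removing the top-right block from $A_N$ produces a perturbation whose rank is already $o(N)$, so it cannot affect the GLT symbol. In fact, one could bypass most of the decomposition altogether by writing $A_N - P_N = \bigl[\,\mathrm{O}\,\bigl|\,-\alpha_n\mathbf{q}\otimes I_m\,\bigr]$ padded with zeros, noticing that this difference has rank at most $m=o(N)$, invoking Lemma \ref{lemma:zero-distr} to conclude $\{A_N - P_N\}_N \sim_\glt 0$, and then deducing from Theorem \ref{thm:coeff-mat-sym} and GLT \ref{glt:algebra} that $\{P_N\}_N$ inherits the same GLT symbol as $\{A_N\}_N$. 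I would actually prefer this shorter route, presenting the full decomposition only if consistency with the preceding proof is desired.
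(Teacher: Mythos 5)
Your proposal is correct and follows essentially the same route as the paper, whose proof of this theorem is literally one line: repeat the decomposition argument of Theorem \ref{thm:coeff-mat-sym}, noting that only the (now zero) top-right block and the low-rank bottom-left block differ, both of which are negligible. Your shorter alternative --- observing that $A_N - P_N$ has rank at most $m = o(N)$, hence is zero-distributed, so $\{P_N\}_N$ inherits the symbol of $\{A_N\}_N$ by GLT \ref{glt:algebra} --- is equally valid (still under the hypothesis $\alpha_n/\beta_m \to \nu$, needed to invoke Theorem \ref{thm:coeff-mat-sym}) and is in fact the same trick the paper uses for the preconditioned sequence in the subsequent theorem.
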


\begin{proof}
    Immediate by following the same reasoning as in the proof of Theorem \ref{thm:coeff-mat-sym}.
\end{proof}

The GLT symbol and singular value distribution of the preconditioned matrix sequence can be derived as a corollary of Theorems \ref{thm:coeff-mat-sym} and \ref{thm:precond-sym}, simply by exploiting the algebra properties in GLT \ref{glt:algebra}. However, it can also be computed more in general avoiding the assumption on the discretization parameters, as we show in the following theorem. Moreover, we can derive the distribution of the eigenvalues.

\begin{theorem} 
    With $A_N$ as in \eqref{eq:coeff-mat} and $P_N$ as in \eqref{eq:precond}, it holds
    \begin{equation*}
        \{P_N^{-1}A_N\}_N \sim_{\glt,\sigma} 1.
    \end{equation*}
    Moreover,
    \begin{equation*}
        \{P_N^{-1}A_N\}_N \sim_\lambda 1.
    \end{equation*}
\end{theorem}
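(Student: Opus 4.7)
The plan is to exploit the fact that $A_N$ and $P_N$ differ only in the upper-right off-diagonal block of the outer $2\times 2$ partition, so that their difference has low rank. Writing out the difference gives
\begin{equation*}
    A_N - P_N = \left[ \begin{array}{c|c}
        \mathrm{O}_{nm} & -\alpha_n\mathbf{q}\otimes I_m \\
        \hline
        \mathrm{O}_{m\times nm} & \mathrm{O}_m
    \end{array} \right],
\end{equation*}
whose rank is clearly at most $m$. Since $P_N$ is invertible (its diagonal blocks are lower triangular with nonzero diagonal entries, as already observed after \eqref{eq:precond}), I would set $E_N := P_N^{-1}(A_N - P_N)$ so that $P_N^{-1}A_N = I_N + E_N$ with $\rk(E_N) \leq m$. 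Recalling that $N = (n+1)m$, we have $m/N = 1/(n+1) \to 0$ as $n,m \to \infty$, hence $\rk(E_N) = o(N)$.

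For the singular value distribution, I would apply Lemma \ref{lemma:zero-distr} to the trivial decomposition $E_N = E_N + 0$ (that is, $R_n = E_N$ and $N_n = 0$) to conclude that $\{E_N\}_N$ is zero-distributed, and therefore by GLT \ref{glt:symbols} that $\{E_N\}_N \sim_\glt 0$. Combining this with $\{I_N\}_N \sim_\glt 1$ through the sum rule in GLT \ref{glt:algebra} yields $\{P_N^{-1}A_N\}_N \sim_\glt 1$, and a final application of GLT \ref{glt:distrib} gives $\{P_N^{-1}A_N\}_N \sim_\sigma 1$.

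For the eigenvalue distribution, I would invoke the elementary fact that $\ker E_N$ has dimension at least $N - \rk(E_N) \geq N - m$, and that every vector in $\ker E_N$ is an eigenvector of $I_N + E_N$ with eigenvalue $1$. Hence $P_N^{-1}A_N$ has at most $m$ eigenvalues different from $1$, so for every $\varepsilon > 0$ the number of eigenvalues of $P_N^{-1}A_N$ lying outside $B(\{1\},\varepsilon)$ is at most $m = o(N)$. This means $\{P_N^{-1}A_N\}_N$ is weakly clustered at $\{1\}$ in the sense of the eigenvalues, and since the essential range of the constant symbol $\psi \equiv 1$ is $\{1\}$, Remark \ref{remark:distrib-cluster} delivers $\{P_N^{-1}A_N\}_N \sim_\lambda 1$.

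The main obstacle is essentially absent: the whole argument hinges on the rank observation that $A_N - P_N$ is nonzero in only one off-diagonal block, and the remaining steps are routine invocations of Lemma \ref{lemma:zero-distr}, the GLT axioms, and Remark \ref{remark:distrib-cluster}. The only point requiring a moment's care is ensuring $m = o(N)$, which is immediate from $N = (n+1)m$, so no growth restrictions relating $m$ and $n$ are needed for this result.
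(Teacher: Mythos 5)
Your proposal is correct and follows essentially the same route as the paper: both write $P_N^{-1}A_N = I_N + P_N^{-1}(A_N-P_N)$, use $\rk(A_N-P_N)\leq m = o(N)$ together with Lemma \ref{lemma:zero-distr} and the GLT axioms for the singular value part, and obtain the eigenvalue distribution from the weak cluster at $1$ via Remark \ref{remark:distrib-cluster}. The only cosmetic difference is that you argue via the kernel of $P_N^{-1}(A_N-P_N)$ while the paper directly counts the eigenvalues $1+\lambda_j(P_N^{-1}R_N)$; the two arguments are equivalent.
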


\begin{proof}
    Setting $R_N := A_N - P_N$, we have
    \begin{equation*}
         P_N^{-1} A_N = I_N + P_N^{-1}R_N.
    \end{equation*}
    From the fact that $\rk R_N\leq m$, it follows $\rk(P_N^{-1}R_N) \leq m = o(N)$, therefore
    \begin{equation*}
        \{P_N^{-1}R_N\}_N \sim_\glt 0
    \end{equation*}
     by Lemma \ref{lemma:zero-distr}. Hence, by GLT \ref{glt:algebra} and the fact that $\{I_N\}_N \sim_\glt 1$,
    \begin{equation*}
        \{P_N^{-1} A_N\}_N \sim_\glt 1.
    \end{equation*}
    The singular value distribution follows immediately from GLT \ref{glt:distrib}. Concerning the eigenvalue distribution, it is obvious that
    \begin{equation*}
        \lambda_j (P_N^{ -1 } A_N ) = 1 + \lambda_j (P_N^{-1} R_N),
        \qquad j=1,\ldots,N,
    \end{equation*}
    therefore
    \begin{equation*}
        \# \left\{ j=1,\ldots,N  \;\colon\; \lambda_j (P_N^{-1} A_N) \neq 1 \right\} \leq m = o(N).
    \end{equation*}
    In other words, the eigenvalues of $\{ P_N^{-1} A_N \}_N$ are weakly clustered at 1, in the sense of Definition \ref{clustering}. Recalling Remark \ref{remark:distrib-cluster}, this is equivalent to the fact that the sequence is spectrally distributed as 1.
\end{proof}

\begin{remark}
    From an implementation viewpoint, we can define a custom function that inverts $P_N$ through a forward substitution method, thanks to the block triangular structure of the matrix and the triangular structure of the blocks themselves. The computational cost amounts to $O(m^2n^2)$.
\end{remark}

\begin{remark}
    In the special case where $a(x)=a$ is constant, so that $D_m(a)=aI_m$ and $G_m = aB_m$, a more efficient preconditioner can be constructed by approximating each Toeplitz matrix appearing in $P_N$ with the corresponding Strang preconditioner, as follows:
    \begin{equation*}
        \renewcommand{\arraystretch}{1.5}
        S_N := \left[ \begin{array}{c|c}
        	S(U_n) \otimes I_m + I_n\otimes \frac{\alpha_n}{\beta_m} a S(B_m) & \mathrm{O}_{mn \times m} \\
            \hline
        	\mathbf{e}_n^T \otimes I_m & \frac{\lambda}{\beta_m} a S(B_m)
        \end{array} \right],
    \end{equation*}
    where $S(U_n)$, $S(B_m)$ are the standard Strang preconditioners of $U_n$ and $B_m$, respectively. Since circulant matrices are uniformly diagonalizable through the Fourier matrix $F_n$, we can write
    \begin{equation*}
        S(U_n) = F_n \Lambda_n^{(1)} F_n^H,
        \qquad
        S(B_m) = F_m \Lambda_m^{(2)} F_m^H,
    \end{equation*}
    where $\Lambda_n^{(i)}$, $i=1,2$, are the diagonal matrices containing the eigenvalues and can be computed starting from the coefficients of $U_n$, $B_m$ via the Fast Fourier Transform (FFT). Then, $S_N$ becomes
    \begin{equation*}
        \renewcommand{\arraystretch}{1.5}
        S_N := \left[ \begin{array}{c|c}
        	(F_n\otimes F_m)
            \big[\Lambda_n^{(1)}\otimes I_m + I_n\otimes \frac{\alpha_n}{\beta_m} a \Lambda_m^{(2)} \big](F_n\otimes F_m)^H & \mathrm{O}_{mn \times m} \\
            \hline
        	\mathbf{e}_n^T \otimes I_m & \frac{\lambda}{\beta_m} a F_m \Lambda_m^{(2)} F_m^H
        \end{array} \right].
    \end{equation*}
    With this approach, the computational cost of inverting $S_N$ drops to $O\big(mn\log(mn)\big)$, since all the necessary computations can be performed via the FFT. The same reasoning can be carried out using any suitable preconditioner for Toeplitz matrices, drawn from a matrix algebra which is uniformly diagonalizable through a fast transform.
\end{remark}

\section{Numerical results}\label{sec:num}

We conclude by assessing the efficiency of our preconditioning strategy in terms of iterations required by the GMRES.

Let us consider problem \eqref{eq:quasi-problem} in which we set
\begin{gather*}
    T = 1, \qquad a(x) = x, \qquad
    q(t) = t^2, \qquad \varphi(x) = 0.
\end{gather*}

To construct the noisy final data, we select the source function
\begin{equation*}
    f(x) = x\sin(\pi x)
\end{equation*}
and solve the linear system arising from the corresponding direct problem
\begin{equation*}
    \begin{dcases}
        {}^T\!D_t^{\xi,\rho} u(x,t) + a(x) {}^C\!D_x^{\eta} u(x,t) = q(t)f(x),
        & x\in (0,1), \; t\in (0,T),
        \\
        u(0,t) = u(1,t) = 0,
        & t\in (0,T),
        \\
        u(x,0) = \varphi(x),
        & x\in (0,1).
    \end{dcases}
\end{equation*}
With the same discretization technique adopted in Section \ref{sec:discre}, we obtain the linear system
\begin{equation*}
    \tilde{A}_{n,m}\mathbf{u} = \mathbf{b},
\end{equation*}
where
\begin{gather*}
    \renewcommand{\arraystretch}{1.5}
    \tilde{A}_{n,m} := \left[ \begin{array}{cccc}
        \gamma_0 I_m + \frac{\alpha_n}{\beta_m} G_m & \mathrm{O}_m & \cdots & \mathrm{O}_m \\
        \gamma_1 e^{-\rho \Delta t} I_m & \gamma_0 I_m + \frac{\alpha_n}{\beta_m} G_m & \ddots & \vdots \\
        \vdots & \ddots & \ddots & \mathrm{O}_m \\
        \gamma_{n-1} e^{-(n-1)\rho\Delta t} I_m & \cdots & \gamma_1 e^{-\rho \Delta t} I_m & \gamma_0 I_m + \frac{\alpha_n}{\beta_m} G_m
    \end{array} \right]
    \in \C^{nm\times nm},
    \\
    \renewcommand{\arraystretch}{1.5}
    \mathbf{u} :=
        \left[ \begin{array}{c}
        \mathbf{u}^{(1)} \\
        \mathbf{u}^{(2)} \\
        \vdots \\
        \mathbf{u}^{(n)}
    \end{array} \right]
    \in\C^{nm},
    \qquad
    \mathbf{b} :=
        \left[ \begin{array}{c}
            b_0 e^{-\rho\Delta t} \boldsymbol{\varphi} + \alpha_n q^{(1)} \mathbf{f} \\
            b_1 e^{-2\rho\Delta t} \boldsymbol{\varphi} + \alpha_n q^{(2)} \mathbf{f} \\
            \vdots \\
            b_{n-1} e^{-n\rho\Delta t} \boldsymbol{\varphi} + \alpha_n q^{(n)} \mathbf{f}
        \end{array} \right]
        \in\C^{nm}.
\end{gather*}
By solving this system, we construct final time data $\mathbf{u}^{(n)}$, which we perturb by adding a random quantity $\varepsilon\delta (x)$, where $\delta (x)$ is randomly selected in the interval $(-1,1)$. In this way, we simulate the contamination of $\phi(x)$ by noise, obtaining $\boldsymbol{\phi}_\varepsilon$.

Then, we apply the GMRES method to \eqref{eq:lin-syst} and report in Table \ref{table:gmres} the number of iterations required by both the nonpreconditioned and preconditioned linear systems. We set the regularization parameter to $\lambda=5\cdot10^{-3}$, the noise level to $\varepsilon=0.01$, and vary both fractional orders. The initial guess is the zero vector, the tolerance is $\texttt{tol}=10^{-8}$, and the maximum number of iterations is equal to the size of the matrix, $N=(n+1)m$.

Comparing the columns corresponding to the preconditioned and nonpreconditioned linear system, we observe that, for each choice of the fractional orders, $P_N$ is indeed effective in reducing the number of iterations, confirming the theoretical predictions. It also appears to be stable, maintaining good results as the problems sizes increase and across all choices of fractional orders. Indeed, $P_N$ demonstrates the best performance when the iteration counts for the nonpreconditioned linear system are particularly high, i.e, for $\xi=0.2$, $\eta=0.8$.

Furthermore, for each choice of $\xi$ and $\eta$, the convergence rate in the preconditioned framework seems to depend almost exclusively on the spatial discretization finesse: in fact, for each fixed $m$, the number of iterations does not vary significantly as we increase $n$. This is reasonable, considering that the number of outliers in the cluster at 1 of the preconditioned matrix depends only on $m$.

Furthermore, looking more in detail, we observe that the growth in the parameter $m$ is less than linear, i.e., less than a constant times the square root of the global size, when $m$ and $n$ grow proportionally. The described behaviour agrees with the negative results in \cite{MultiNo,MultiNo2}, where theoretical barriers to the optimality of preconditioned Krylov solvers are proven.

\begin{table}[ht] 
    \centering
    \caption{Iterations required for solving the linear system with the GMRES method, without and with the preconditioner, for various choices of the fractional orders.}
    \label{table:gmres}
    \begin{tabular}{cccccccc}
    \toprule
    & & \multicolumn{2}{c}{$\xi=0.2$} & \multicolumn{2}{c}{$\xi=0.5$} & \multicolumn{2}{c}{$\xi=0.8$} \\
    & & \multicolumn{2}{c}{$\eta=0.8$} & \multicolumn{2}{c}{$\eta=0.5$} & \multicolumn{2}{c}{$\eta=0.2$} \\
    \cmidrule(lr){3-4}\cmidrule(lr){5-6}\cmidrule(lr){7-8}
    $m$ & $n$ & - & $P_N$ & - & $P_N$ & - & $P_N$ \\
\midrule
$2^{4}$ & $2^{4}$ & 66 & 15 & 53 & 17 & 45 & 16 \\
$2^{4}$ & $2^{5}$ & 62 & 16 & 60 & 17 & 63 & 16 \\
$2^{4}$ & $2^{6}$ & 64 & 17 & 67 & 17 & 86 & 17 \\
$2^{4}$ & $2^{7}$ & 61 & 17 & 67 & 17 & 141 & 17 \\
$2^{4}$ & $2^{8}$ & 61 & 17 & 79 & 17 & 230 & 16 \\
\midrule
$2^{5}$ & $2^{4}$ & 107 & 26 & 75 & 30 & 49 & 28 \\
$2^{5}$ & $2^{5}$ & 110 & 27 & 79 & 29 & 68 & 27 \\
$2^{5}$ & $2^{6}$ & 110 & 27 & 86 & 29 & 90 & 27 \\
$2^{5}$ & $2^{7}$ & 103 & 27 & 89 & 29 & 142 & 27 \\
$2^{5}$ & $2^{8}$ & 100 & 27 & 96 & 30 & 230 & 28 \\
\midrule
$2^{6}$ & $2^{4}$ & 213 & 36 & 101 & 49 & 51 & 40 \\
$2^{6}$ & $2^{5}$ & 211 & 35 & 103 & 48 & 71 & 39 \\
$2^{6}$ & $2^{6}$ & 207 & 36 & 110 & 47 & 94 & 39 \\
$2^{6}$ & $2^{7}$ & 193 & 37 & 109 & 48 & 145 & 41 \\
$2^{6}$ & $2^{8}$ & 194 & 36 & 117 & 49 & 231 & 40 \\
\midrule
$2^{7}$ & $2^{4}$ & 503 & 46 & 136 & 71 & 56 & 57 \\
$2^{7}$ & $2^{5}$ & 493 & 46 & 136 & 71 & 76 & 56 \\
$2^{7}$ & $2^{6}$ & 458 & 46 & 137 & 70 & 98 & 56 \\
$2^{7}$ & $2^{7}$ & 440 & 46 & 136 & 72 & 146 & 56 \\
$2^{7}$ & $2^{8}$ & 436 & 48 & 138 & 71 & 232 & 56 \\
\midrule
$2^{8}$ & $2^{4}$ & 1237 & 60 & 181 & 101 & 59 & 77 \\
$2^{8}$ & $2^{5}$ & 1165 & 59 & 174 & 100 & 78 & 76 \\
$2^{8}$ & $2^{6}$ & 1149 & 58 & 171 & 99 & 100 & 76 \\
$2^{8}$ & $2^{7}$ & 1044 & 59 & 168 & 100 & 150 & 76 \\
$2^{8}$ & $2^{8}$ & 985 & 60 & 166 & 101 & 234 & 76 \\
    \bottomrule
    \end{tabular}
\end{table}

\section{Conclusions} \label{sec:end}
In this work, we analyzed the inverse problem of reconstructing the source term in a time-space fractional diffusion equation. Since the problem is inherently ill-posed, the quasi-boundary value regularization method has been employed. We discretized the problem both in space and time via a finite difference scheme, leading to a family of linear system of increasing dimension. The discretization matrices exhibit a 2-level block structure with an inherent lower triangular Toeplitz configuration. We studied the global properties of these sequences of linear systems using the tools provided by GLT theory and were able to provide a preconditioner that yields a weak eigenvalue cluster at 1. The theoretical study was corroborated by numerical experiments, which showed that the proposed preconditioner significantly accelerates the convergence of the GMRES method. Further research may focus on constructing a more accurate preconditioner that accounts for the remaining outliers and possibly leads to a strong eigenvalue cluster at 1. In fact, even in presence of the negative findings in \cite{MultiNo,MultiNo2}, there exist cases especially in the fractional setting where the optimality can still be reached; see \cite{against barrier1,against barrier2}.

\section*{Acknowledgments}
The research of Stefano Serra-Capizzano is supported by the PRIN-PNRR project \lq\lq MATH-ematical tools for predictive maintenance and PROtection of CULTtural heritage (MATHPROCULT)'' (code P20228HZWR, CUP J53D23003780006), by INdAM-GNCS Project \lq\lq Analisi e applicazioni di matrici strutturate (a blocchi)'' (CUP E53C23001670001), and by the European High-Performance Computing Joint Undertaking (JU) under Grant Agreement 955701. The JU receives support from the European Union’s Horizon 2020 research and innovation programme and Belgium, France, Germany, Switzerland. Furthermore Stefano Serra-Capizzano is grateful for the support of the Laboratory of Theory, Economics and Systems – Department of Computer Science at Athens University of Economics and Business. The research of Rosita L. Sormani is funded by the PRIN-PNRR project \lq\lq A mathematical approach to
inverse problems arising in cultural heritage preservation and dissemination'' (code P2022PMEN2, CUP F53D23010100001). Finally Stefano Serra-Capizzano and Rosita L. Sormani are partly supported by Italian National Agency INdAM-GNCS.

\end{document}